\DeclareMathOperator{\Prob}{Prob}
\def\A{\mathbb{A}}
\def\P{\mathbb{P}}
\def\F{\mathbb{F}}
\theoremstyle{plain}
\newtheorem{thm}{Theorem}
\newtheorem{Lem}[thm]{Lemma}
\newtheorem{Cor}[thm]{Corollary}
\newtheorem{Conj}[thm]{Conjecture}
\theoremstyle{remark}
\newtheorem{Rem}[thm]{Remark}
\newtheorem{Exa}[thm]{Example}
\renewcommand\footnotemark{}
\newcommand{\subjclass}[2][2020]{%
  \let\@oldtitle\@title%
  \gdef\@title{\@oldtitle\footnotetext{#1 \emph{Mathematics subject classification.} #2}}%
}
\newcommand{\keyywords}[1]{%
  \let\@@oldtitle\@title%
  \gdef\@title{\@@oldtitle\footnotetext{\emph{Key words and phrases.} #1.}}%
}
\begin{document}

\title{Improved Lang--Weil bounds for a geometrically irreducible hypersurface over a finite field}
\subjclass{Primary 14G15, 11T06, 14G05; Secondary 05B25.}\keyywords{Lang--Weil bound, hypersurface, Bertini's theorem, random sampling}
\author{Kaloyan Slavov
\thanks{This research was
supported by NCCR SwissMAP of the SNSF.}
}

\maketitle

\begin{abstract}
We sharpen to nearly optimal the known asymptotic and explicit bounds for the number of 
$\mathbb{F}_q$-rational points on a geometrically irreducible hypersurface over a (large) finite field. 
The proof involves a Bertini-type probabilistic combinatorial technique. Namely, 
we study the number of $\mathbb{F}_q$-points on the intersection of the given hypersurface with a random plane.
\end{abstract}

\section{Introduction}
\label{sec:Intro}

Let $n\geq 2$, $d\geq 1$, and let $\F_q$ be a finite field.
Let $X\subset\A^n_{\F_q}$ be a geometrically irreducible hypersurface of degree $d$. Lang and Weil \cite{Lang_Weil} have established the bound
\begin{equation}
||X(\F_q)|-q^{n-1}|\leq (d-1)(d-2)q^{n-3/2}+C_d q^{n-2},
\label{eq:Lang_Weil_classical}
\end{equation}
where $C_d$ depends only on $d$ and $n$.

We summarize the smallest known possible values of $C_d$ available in the literature.
\begin{description}
\item[a)] Suppose that $n=2$. Aubry and Perret \cite{Aubry_Perret} prove that 
\begin{equation}
q-(d-1)(d-2)\sqrt{q}-d+1\leq |X(\F_q)|
\leq q+(d-1)(d-2)\sqrt{q}+1.
\label{eq:Aubry_Perret}
\end{equation}

\item[b)] Ghorpade and Lachaud \cite{GL} prove that one can take $C_d=12(d+3)^{n+1}$ in \eqref{eq:Lang_Weil_classical}.
\item[c)] Cafure and Matera \cite{Cafure_Matera} prove that one can take $C_d=5d^{13/3}$ in \eqref{eq:Lang_Weil_classical}; moreover, if $q>15d^{13/3}$, one can take $C_d=5d^2+d+1$. 
\item[d)] The author \cite{Slavov_FFA} has established 
the lower bound (for any $\varepsilon>0$)
\[|X(\F_q)|\geq q^{n-1}-(d-1)(d-2)q^{n-3/2}-(d+2+\varepsilon)q^{n-2}\]
for $q\gg 1$ (with an explicit condition on $q$).

\item[e)] The author's Theorem 8 in the preprint \cite{Slavov_preprint} implies that for every $\varepsilon>0$, $\varepsilon'>0$, we have
\[|X(\F_q)|\leq q^{n-1}+(d-1)(d-2)q^{n-3/2}+((2+\varepsilon)d+1+\varepsilon')q^{n-2}\] 
as long as $q\gg 1$ (again with an explicit condition on 
$q$).    
\end{description} 

In this note we tighten the known asymptotic and explicit bounds for $|X(\F_q)|$ when $q$ is large relative to $d$. 

We first look at upper bounds. 

\begin{thm}
Let $X\subset\A^n_{\F_q}$ be a geometrically irreducible hypersurface of degree $d$. Then
\begin{equation}
|X(\F_q)|\leq q^{n-1}+(d-1)(d-2)q^{n-3/2}+\left(1+\pi^2/6\right)q^{n-2}+O_{d}(q^{n-5/2}),
\label{eq:pi}
\end{equation}
where the implied constant depends only on $d$ and can be computed effectively. 
\label{thm:pi}
\end{thm}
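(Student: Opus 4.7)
The plan is to bound $|X(\F_q)|$ by averaging $|(X\cap H)(\F_q)|$ over affine $2$-planes $H\subset\A^n_{\F_q}$ and invoking the Aubry--Perret bound \eqref{eq:Aubry_Perret} on each planar slice, as the abstract suggests.

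The first step is a double-counting identity. Let $M$ denote the number of affine $2$-planes of $\A^n$ passing through a fixed $\F_q$-point, and let $T$ denote the total number of affine $2$-planes; counting the incidences $\{(P,H):P\in H\subset \A^n\}$ in two ways gives $T\cdot q^2 = q^n\cdot M$, i.e.\ $T/M = q^{n-2}$. Regrouping the same sort of incidence over $X(\F_q)$ yields
\[
|X(\F_q)|\cdot M \;=\; \sum_H |(X\cap H)(\F_q)|,
\]
where $H$ ranges over all affine $2$-planes of $\A^n_{\F_q}$.

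Next I would split planes into \emph{good} ones, where $X\cap H$ is a geometrically irreducible affine plane curve of degree exactly $d$, and \emph{bad} ones. For good $H$, \eqref{eq:Aubry_Perret} gives $|(X\cap H)(\F_q)|\le q+(d-1)(d-2)\sqrt{q}+1$; summing this bound over \emph{all} $H$ and dividing by $M$ already produces
\[
\frac{T}{M}\bigl(q+(d-1)(d-2)\sqrt{q}+1\bigr) \;=\; q^{n-1}+(d-1)(d-2)q^{n-3/2}+q^{n-2},
\]
accounting for the main terms and for the ``$1$'' in the coefficient $1+\pi^2/6$. It remains to control, over bad $H$, the excess
\(
|(X\cap H)(\F_q)|-(q+(d-1)(d-2)\sqrt{q}+1)
\)
and to show that its sum, divided by $M$, is at most $(\pi^2/6)q^{n-2}+O_d(q^{n-5/2})$.

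For this I would stratify the bad planes by the geometric decomposition type of $X\cap H$: the partition of $d$ into component degrees, the Galois action on the components, their multiplicities, and, as a degenerate case, whether $H\subset X$. On each stratum an effective Bertini-type estimate (in the lineage of \cite{GL,Cafure_Matera,Slavov_preprint}) bounds the number of $H$ of that type, while a component-wise Weil-type estimate controls $|(X\cap H)(\F_q)|$. The anticipated hard step is the bookkeeping that collapses the $d$-dependent constant of \cite{Slavov_preprint} into the absolute value $\pi^2/6$: the stratum densities and their point-count excesses must be matched so tightly that the resulting sum is dominated by a Basel-type series $\zeta(2)=\sum_{k\ge1}1/k^2$, with the $d$-dependence pushed entirely into the lower-order remainder $O_d(q^{n-5/2})$. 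The dominant contribution is expected to come from planes on which $X\cap H$ splits off a single line, while higher-degree degenerations feed the $1/k^2$ tail.
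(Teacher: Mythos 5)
Your proposal shares the paper's starting point—the double-counting identity, which is equivalent to the statement that $\mathbb E[\#(X\cap H)(\F_q)]=N/q^{n-2}$ for a uniformly random plane $H$—and correctly identifies the final shape of the answer, including the fact that the $\pi^2/6$ comes from a tail of the form $\sum_{k\ge 1}1/k^2$, with the dominant contribution from planes whose slice gains one extra component. But the actual mechanism that produces the $1/k^2$ decay is missing, and it is the crux of the whole argument. You write that on each stratum ``an effective Bertini-type estimate bounds the number of $H$ of that type'' and then acknowledge that ``the anticipated hard step is the bookkeeping that collapses the $d$-dependent constant into $\pi^2/6$.'' That bookkeeping does not follow from Bertini-type counts of planes with a given decomposition type; such counts give bounds like $O_d(1/q)$ for the fraction of bad planes but give no natural $k$-dependence, and trying to extract one directly is genuinely hard.

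What the paper does instead is elementary and sidesteps decomposition types entirely: it applies a \emph{second-moment} argument. From the incidence counting one gets not only the mean $\mu=N/q^{n-2}$ but also a variance bound $\sigma^2\le\mu\le q+O(\sqrt q)$ (Lemma 10 in \cite{Slavov_FFA}, plus the Lang--Weil bound as input). If $\#(X\cap H)(\F_q)$ lies in the interval $J_k$ near $kq$ for some $k\ge 2$, then it is at distance $\ge (k-1)q-O(\sqrt q)$ from $\mu$, so Chebyshev's inequality forces
\[
\Prob\bigl(\#(X\cap H)(\F_q)\in J_k\cup\cdots\cup J_d\bigr)\le \frac{\sigma^2}{\bigl((k-1)q-O(\sqrt q)\bigr)^2}\le \frac{1}{(k-1)^2 q}+O(q^{-3/2}).
\]
That $1/(k-1)^2$ is exactly what produces the Basel series after an Abel summation against the widths $b_k-b_{k-1}\approx q$. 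No stratification by decomposition type, no effective Bertini, and no component-wise Weil estimates are needed; the only geometric inputs are a crude interval structure (Lemma \ref{Lem:bounds_near_kq}, plus a sharpened Lemma \ref{Lem:important} for the $k=1$ case to get the main term right). Your separation into ``good'' (geometrically irreducible slice) versus ``bad'' also does not quite match the paper's partition: what matters is the number $k$ of geometrically irreducible $\F_q$-irreducible components of $X\cap H$, and the $k=1$ stratum includes slices that are \emph{not} geometrically irreducible, which is precisely why Lemma \ref{Lem:important} has to be proved. So the proposal is morally in the right direction but contains a genuine gap at the step you flagged, and filling it requires replacing the stratification/Bertini plan with the variance-plus-Chebyshev device.
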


We can exhibit explicit bounds, as in the theorem below. 

\begin{thm}
Let $X\subset\A^n_{\F_q}$ be a geometrically irreducible hypersurface of degree $d$.  
Suppose that $q>15d^{13/3}$. Then
\begin{equation}
|X(\F_q)|\leq q^{n-1}+(d-1)(d-2)q^{n-3/2}+5q^{n-2}.
\label{eq:explicit}
\end{equation}
\label{thm:explicit}
\end{thm}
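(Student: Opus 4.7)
The plan is to apply the Bertini-type probabilistic strategy advertised in the abstract: for $n\ge 3$, intersect $X$ with a uniformly random affine $2$-plane $H \subset \A^n$ and reduce to the curve bound \eqref{eq:Aubry_Perret} of Aubry--Perret (the case $n=2$ is immediate from \eqref{eq:Aubry_Perret}). First I would parameterize affine $2$-planes so that every point $P \in \A^n(\F_q)$ is contained in the same number of them. A double count of incidences $(P,H)$ with $P \in X(\F_q)\cap H$ then yields the averaging identity
\[
|X(\F_q)| = q^{n-2}\cdot \mathbb{E}_H\, |(X\cap H)(\F_q)|,
\]
so it suffices to prove $\mathbb{E}_H\,|(X\cap H)(\F_q)| \le q + (d-1)(d-2)\sqrt{q} + 5$.

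Call $H$ \emph{good} if $X\cap H$ is a geometrically irreducible plane curve of degree $d$, and \emph{bad} otherwise. On good planes, \eqref{eq:Aubry_Perret} gives $|(X\cap H)(\F_q)| \le q+(d-1)(d-2)\sqrt{q}+1$; on bad planes I would use the trivial bound $|(X\cap H)(\F_q)| \le dq$, treating the rare degenerate case $H\subset X$ separately (a geometrically irreducible hypersurface of degree $d$ contains only $O_d(1)$ affine $2$-planes, so this contributes negligibly). Writing $p$ for the probability that $H$ is bad, these estimates combine to
\[
\mathbb{E}_H\, |(X\cap H)(\F_q)| \le q + (d-1)(d-2)\sqrt{q} + 1 + p(d-1)q,
\]
so the theorem reduces to the effective Bertini estimate $p \le 4/((d-1)q)$, valid whenever $q > 15 d^{13/3}$.

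The main obstacle will be proving this estimate with the correct $1/(dq)$ dependence: only $p=O(1/q)$ would yield a constant growing linearly in $d$, as in the author's earlier bound \cite{Slavov_preprint}. I would describe the bad locus $\mathcal{B}$ as a constructible subset of the parameter space of affine $2$-planes, cut out algebraically by the reducibility (or vanishing of the leading form) of the restricted defining polynomial $f|_H$, and count $|\mathcal{B}(\F_q)|$ via the crude Lang--Weil estimate of Cafure--Matera \cite{Cafure_Matera}, whose applicability threshold $q > 15 d^{13/3}$ is exactly our hypothesis. Extracting the extra $1/d$ saving will require a finer analysis --- for example, stratifying $\mathcal{B}$ by the factorization type of $f|_H$ and bounding each stratum separately, or exploiting a codimension-$2$ statement arising from the two independent Bertini steps (passing from $X$ to a hyperplane section and then to a $2$-plane section) --- in order to beat the naive Lang--Weil count by a factor of $d$. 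Once $p$ is bounded, inserting the estimate into the averaging inequality and multiplying by $q^{n-2}$ produces \eqref{eq:explicit} after a short arithmetic check.
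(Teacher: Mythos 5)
Your setup (random $2$-plane slicing, the exact averaging identity, reduction to a mean estimate) matches the paper's high-level strategy, but the reduction to a single ``bad-plane'' probability estimate $p\le 4/((d-1)q)$ is a genuine gap: that estimate is not merely hard to prove, it is false. Take the cylinder $X=C\times\A^{n-2}$ with $C\subset\A^2$ a geometrically irreducible curve of degree $d$. A plane $H$ with direction space $V$ gives a reducible slice precisely when the projection of $V$ to the first two coordinates has rank $\le 1$, i.e.\ when $V$ meets the fixed $(n-2)$-dimensional coordinate subspace nontrivially. This is a codimension-$1$ Schubert condition, so its probability is $\Theta(1/q)$ with an implied constant that does not decay in $d$. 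Thus $p\ge c/q$ for a universal $c>0$, and $p\cdot(d-1)q$ grows linearly in $d$. Your program would therefore recover only the author's earlier estimate (item (e) of the introduction, with error $O(d)q^{n-2}$), not the claimed $5q^{n-2}$. The proposed ``codimension-$2$'' Bertini argument cannot rescue this, because the bad locus genuinely has codimension $1$.

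What the paper does instead --- and what your lumped good/bad dichotomy cannot see --- is stratify the bad event by the number $k$ of geometrically irreducible $\F_q$-irreducible components of $X\cap H$, confine $|(X\cap H)(\F_q)|$ to an interval $I_k$ near $kq$ (Schmidt's lemma, Lemma~\ref{Lem:bounds_near_kq}), and then prove the \emph{tail} estimate $\Prob(k'\ge k)\lesssim 1/((k-1)^2 q)$ via Chebyshev's inequality applied to the random variable $\#(X\cap H)(\F_q)$, whose variance is bounded by the mean via the second-moment identity (Lemma~10 of \cite{Slavov_FFA}). Abel summation then converts $\sum_k p_k\cdot kq$ into roughly $\sum_{k\ge 2}\tfrac{1}{(k-1)^2}<\pi^2/6$, a constant independent of $d$; the full probability $\sum_{k\ge 2}p_k\sim 1/q$ is consistent with this because the large-$k$ planes that cost you $\approx kq$ each become rapidly rarer. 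Two further ingredients you are missing: Lemma~\ref{Lem:important}, which shows that even a plane whose slice has exactly one geometrically irreducible $\F_q$-component (plus possibly extra non-geometrically-irreducible pieces) still obeys the Aubry--Perret bound $q+(d-1)(d-2)\sqrt q+1$ --- so such planes must be counted as ``good,'' which your definition does not allow; and the Chebyshev/variance bound, which is what gives the quantitative $1/((k-1)^2 q)$ (and the $H\subset X$ event a probability $\approx q^{-3}$) without appealing to any Lang--Weil count of the incidence variety of bad planes. The Cafure--Matera threshold $q>15d^{13/3}$ enters only to get an explicit a priori bound on $N/q^{n-2}$, hence on $\sigma^2$, not to bound a bad locus in the Grassmannian as you propose.
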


\begin{Exa}[Cylinder over a maximal curve]
Let $d\geq 3$ be such that $d-1$ is a prime power. Let $q$ be an odd power of $(d-1)^2$.
Consider the  curve $C=\{y^{d-1}+y=x^d\}$ in $\A^2_{\F_q}$. It is known (see, for example, \cite{Stichtenoth}) that 
$\#C(\F_q)=q+(d-1)(d-2)\sqrt{q}$. Thus the 
number of $\F_q$-points on $C\times\A^{n-2}$ is $q^{n-1}+(d-1)(d-2)q^{n-3/2}$.
\label{Exa:Hermitian}
\end{Exa}

\begin{Rem}
While the cylinder $C\times\A^{n-2}$ in Example \ref{Exa:Hermitian} is nonsingular, its Zariski closure in 
$\P^n$ has a large (in fact, $(n-3)$-dimensional) singular locus. In general, let $X\subset\A^n$ be a geometrically irreducible hypersurface such that 
$\#X(\F_q)\geq q^{n-1}+(d-1)(d-2)q^{n-3/2}-O_d(q^{n-2})$ for large $q$. Theorem 6.1 in \cite{GL} implies that the Zariski closure of $X$ in $\P^n$ must have singular locus of dimension $n-3$ or $n-2$.   
\end{Rem}

As in Theorem 4 in \cite{Slavov_FFA}, we can exhibit a forbidden interval for $|X(\F_q)|$. Notice that $X$ is not necessarily geometrically irreducible in the statement below. 

\begin{thm}
Let $X\subset\A^n_{\F_q}$ be a hypersurface of degree $d$. If
\begin{equation}
|X(\F_q)|\leq \frac{3}{2}q^{n-1}-(d-1)(d-2)q^{n-3/2}-
(d^2+d+1)q^{n-2},
\label{eq:N_with_three_halves}
\end{equation}
then in fact
\begin{equation}
|X(\F_q)|\leq q^{n-1}+(d-1)(d-2)q^{n-3/2}+12q^{n-2}.
\label{eq:zone_11}
\end{equation}
\label{thm:zone}
\end{thm}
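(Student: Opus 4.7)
The plan is to decompose $X$ by geometric irreducibility and analyze the resulting cases. Write $X=X_1\cup\cdots\cup X_r\cup W$, where each $X_i$ is an $\F_q$-irreducible, geometrically irreducible component of $X$ of degree $e_i$, and $W$ is the union of the $\F_q$-irreducible components of $X$ that split over $\overline{\F_q}$. Since $X$ is a hypersurface of degree $d$, we have $\sum_i e_i+\deg W\leq d$.

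First I would bound $|W(\F_q)|$. Any geometrically reducible but $\F_q$-irreducible hypersurface component $Y\subset W$ of degree $f$ has its $\F_q$-points fixed by Galois, so they lie in the intersection of two Galois-conjugate geometric components of $Y$ — a subvariety of $\A^n$ of dimension $\leq n-2$ and degree $\leq f^2/4$. The Schwartz--Zippel bound then gives $|Y(\F_q)|\leq (f^2/4)\,q^{n-2}$, and summing over such $Y$ yields $|W(\F_q)|=O_d(q^{n-2})$ with an explicit constant polynomial in $d$.

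Next I would dichotomize on $r$. If $r\leq 1$: when $r=0$, $|X(\F_q)|=|W(\F_q)|$ is already dominated by the desired bound \eqref{eq:zone_11}; when $r=1$, apply Theorem~\ref{thm:explicit} (for $q>15d^{13/3}$, with the Cafure--Matera bound as a fallback otherwise) to the geometrically irreducible $X_1$ of degree $e_1\leq d$, giving $|X_1(\F_q)|\leq q^{n-1}+(e_1-1)(e_1-2)q^{n-3/2}+O_d(q^{n-2})$, and add $|W(\F_q)|$ using $(e_1-1)(e_1-2)\leq (d-1)(d-2)$ to conclude \eqref{eq:zone_11}. If $r\geq 2$: apply the lower-bound form of Lang--Weil (as in \cite{GL}) to $X_1$ and $X_2$ and bound $|X_1\cap X_2(\F_q)|\leq e_1 e_2\,q^{n-2}$ to obtain
\[
|X(\F_q)|\geq 2q^{n-1}-\bigl[(e_1-1)(e_1-2)+(e_2-1)(e_2-2)\bigr]q^{n-3/2}-O_d(q^{n-2}).
\]
Using the elementary inequality $(e_1-1)(e_1-2)+(e_2-1)(e_2-2)\leq (d-1)(d-2)$ whenever $e_1,e_2\geq 1$ and $e_1+e_2\leq d$, I would arrive at $|X(\F_q)|\geq 2q^{n-1}-(d-1)(d-2)q^{n-3/2}-O_d(q^{n-2})$, which strictly exceeds the right-hand side of \eqref{eq:N_with_three_halves} for $q$ large, contradicting the hypothesis.

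The hard part will be the explicit constant chasing. Keeping the coefficient of $q^{n-2}$ in \eqref{eq:zone_11} down to $12$ requires combining Theorem~\ref{thm:explicit} with the $|W|$-estimate economically, and the $(d^2+d+1)$ term in the hypothesis is calibrated precisely so that the $\tfrac{1}{2}q^{n-1}$ gap in the $r\geq 2$ branch strictly dominates the remaining $O_d(q^{n-2})$ error across the entire range of $q$ for which \eqref{eq:N_with_three_halves} is nontrivial; for very small $q$ the right-hand side of \eqref{eq:N_with_three_halves} is nonpositive and the hypothesis is vacuous.
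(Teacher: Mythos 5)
Your proposed route — decomposing $X$ into geometrically irreducible $\F_q$-components $X_1,\dots,X_r$ and a geometrically reducible remainder $W$, bounding $|W(\F_q)|$ via Schwartz--Zippel, and dichotomizing on $r$ — is genuinely different from the paper's proof, which applies the random plane-slicing machinery directly to $X$ (no decomposition): the hypothesis \eqref{eq:N_with_three_halves} is used to control the variance ($\sigma^2\leq N/q^{n-2}\leq \tfrac32 q$) and the distance $a_k - N/q^{n-2}\geq \tfrac{k-1}{2}q$, after which Chebyshev and Abel summation yield the constant $12$ without ever examining the irreducible components of $X$.

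The $r=0$ and $r\geq 2$ branches of your argument are sound in spirit, and you correctly identify why the $\tfrac32 q^{n-1}$ threshold in \eqref{eq:N_with_three_halves} morally forces $r\leq 1$. The gap is in the $r=1$ branch, and it is fatal. Here you are left needing the bound
\[
|X_1(\F_q)|\leq q^{n-1}+(e_1-1)(e_1-2)q^{n-3/2}+O(q^{n-2})
\]
for a geometrically irreducible $X_1$, with the $O(q^{n-2})$ carrying an absolute constant (so that, after adding $|W(\F_q)|$, the total coefficient of $q^{n-2}$ stays at $12$). But the range of $q$ in which Theorem~\ref{thm:zone} has any content is exactly $16d^4+\cdots < q < 15d^{13/3}$: below the lower threshold the right-hand side of \eqref{eq:N_with_three_halves} falls below that of \eqref{eq:zone_11} and the implication is vacuous, while above $15d^{13/3}$ Theorem~\ref{thm:explicit} already does strictly better. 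In that critical window, Theorem~\ref{thm:explicit} is inapplicable (it requires $q>15d^{13/3}$), and the unconditional Cafure--Matera bound gives $C_d=5d^{13/3}$ — a coefficient that grows with $d$, not an absolute constant like $12$. In other words, the statement you need for $r=1$ is precisely (a slightly restricted form of) the theorem you are trying to prove, so the decomposition does not reduce the problem to anything known; it only defers the genuine work back to the geometrically irreducible case. You would still need a new argument there — which is exactly what the paper supplies via the slicing-and-Chebyshev estimates, and why the paper does not decompose $X$ at all. The hypothesis \eqref{eq:N_with_three_halves} is not merely a tool to rule out $r\geq 2$; it is the quantitative input that makes the variance and tail bounds in the probabilistic argument small enough to produce the constant $12$, and your proposal does not use it in the one case where it matters.
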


\begin{Rem}
Let us write $g(d)+\cdots$ for an effectively computable $g(d)+g_1(d)$, where $g_1(d)=o(g(d))$ for 
$d\to\infty$. Theorem \ref{thm:zone} has content when the right-hand side of \eqref{eq:N_with_three_halves} exceeds the right-hand side of \eqref{eq:zone_11}, which takes place for $q>16d^4+\cdots$. Thus in the presence of Theorem \ref{thm:explicit}, Theorem \ref{thm:zone} addresses the range $16d^4+\cdots<q<15d^{13/3}$. Notice that in the Lang--Weil bound \eqref{eq:Lang_Weil_classical}, the approximation term $q^{n-1}$ dominates the error precisely when $q>d^4+\cdots$.  This is why it is reasonable to frame the entire discussion of the Lang--Weil bound in the range $q>d^4+\cdots$. For example, any lower Lang--Weil bound is trivial for $q$ below this threshold.
\end{Rem}

We improve the lower bounds for $|X(\F_q)|$ as well. The proof of Theorem 4 in \cite{Slavov_FFA} actually gives a lower bound which is tighter for $q\gg 1$ than the one stated in \cite{Slavov_FFA}.  

\begin{thm}
Let $X\subset\A^n_{\F_q}$ be a geometrically irreducible hypersurface of degree $d$. Then
\begin{equation}
|X(\F_q)|\geq q^{n-1}-(d-1)(d-2)q^{n-3/2}-dq^{n-2}-O_d(q^{n-5/2}),
\label{eq:lower_bound}
\end{equation}
where the implied constant depends only on $d$ and can be computed explicitly. 
\label{thm:lower_bound}
\end{thm}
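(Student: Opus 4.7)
The plan is to adapt the random $2$-plane slicing technique from \cite{Slavov_FFA}, as alluded to in the remark preceding the theorem, and to track the constants more carefully than was done there. On each slice the key input is the Aubry--Perret lower bound \eqref{eq:Aubry_Perret} for the number of $\F_q$-points on a geometrically irreducible plane curve.

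Let $\mathcal{P}$ denote the set of $\F_q$-rational affine $2$-planes in $\A^n$. Each point of $\A^n(\F_q)$ lies in exactly $M=\binom{n}{2}_q$ such planes, so double-counting the incidences $\{(x,H):x\in X(\F_q)\cap H\}$ yields
\[M\cdot|X(\F_q)|=\sum_{H\in\mathcal{P}}|X(\F_q)\cap H|,\]
and the ratio $|\mathcal{P}|/M$ equals $q^{n-2}$ exactly (each $2$-dimensional linear subspace admits exactly $q^{n-2}$ affine translates). An effective Bertini-type statement over $\F_q$, as in \cite{Cafure_Matera} or \cite{Slavov_preprint}, produces a subset $\mathcal{G}\subset\mathcal{P}$ of ``good'' planes for which $X\cap H$ is a geometrically irreducible plane curve of degree $d$, with $|\mathcal{P}\setminus\mathcal{G}|=O_d(q^{3n-7})$. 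For each $H\in\mathcal{G}$ the Aubry--Perret bound gives
\[|X(\F_q)\cap H|\geq q-(d-1)(d-2)\sqrt{q}-(d-1),\]
while on bad planes I use only $|X(\F_q)\cap H|\geq 0$. Summing and dividing by $M$ gives
\[|X(\F_q)|\geq\bigl(q^{n-2}-O_d(q^{n-3})\bigr)\bigl(q-(d-1)(d-2)\sqrt{q}-(d-1)\bigr),\]
and expanding the product yields \eqref{eq:lower_bound}, with the extra terms absorbed into $O_d(q^{n-5/2})$.

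The main obstacle is the Bertini step: one needs a sufficiently sharp explicit bound on the size of the bad locus in the Grassmannian of affine $2$-planes in $\A^n$ so that the contribution of bad planes to the coefficient of $q^{n-2}$ stays at most $1$, yielding the claimed coefficient $-d$ rather than something larger in absolute value. The bad locus stratifies into planes for which $X\cap H$ has degree $<d$ (a dual-variety type condition, determined by the behavior of $X$ at infinity) and planes for which $X\cap H$ is reducible (corresponding to a constructible subvariety of the Grassmannian). Both strata are cut out by polynomial conditions of codimension $\geq 1$, and Schwartz--Zippel-type estimates in the spirit of \cite{Cafure_Matera,Slavov_preprint} give the required point-count $O_d(q^{3n-7})$ with an effective constant, which is precisely what makes the implied constant in the $O_d(q^{n-5/2})$ error effectively computable as asserted in the theorem.
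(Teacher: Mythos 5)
Your proposal shares the same high-level strategy as the paper — averaging $|X\cap H|$ over random affine $2$-planes $H$ and using the Aubry--Perret bound \eqref{eq:Aubry_Perret} on ``good'' slices — but the crucial probability estimate is obtained differently, and your version of it has a genuine gap.

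The paper defines a plane $H$ to be \emph{bad} simply when $\#(X\cap H)(\F_q)\leq d^2/4$ (i.e.\ $\in I_0$), making no reference to whether $X\cap H$ is geometrically irreducible or of full degree $d$. It then estimates the probability of a bad plane by a second-moment (Chebyshev) argument: the variance satisfies $\sigma^2\leq N/q^{n-2}\leq q+O(\sqrt{q})$ (Lemma 10 of \cite{Slavov_FFA}), a bad plane has $|\#(X\cap H)(\F_q)-\mu|\geq q-O(\sqrt q)$, so Chebyshev gives $\Prob(\text{bad})\leq q^{-1}+O(q^{-3/2})$ with leading coefficient \emph{exactly} $1$. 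Every good plane then contributes $\geq a_1 = q-(d-1)(d-2)\sqrt q - d + 1$, and multiplying out yields the stated coefficient $-d$ of $q^{n-2}$.

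Your proposal instead calls ``bad'' any plane for which $X\cap H$ is not a geometrically irreducible plane curve of degree $d$, and asserts via an effective Bertini/Schwartz--Zippel estimate that the number of such planes is $O_d(q^{3n-7})$. This is where the argument breaks. An $O_d$ bound with an unspecified (possibly large, $d$-dependent) constant is not enough: to obtain the coefficient $-d$ one needs the bad set to have size at most $(1+o(1))\,q^{3n-7}$, i.e.\ leading constant $1$, as you yourself observe. The Bertini-type bounds in \cite{Cafure_Matera} and \cite{Slavov_preprint} give constants that grow with $d$, which would force a correspondingly worse coefficient of $q^{n-2}$. You flag this as ``the main obstacle'' but then invoke precisely those references as if they supplied the needed sharpness — they do not. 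The paper's Chebyshev device is exactly what circumvents this: it avoids any geometric classification of bad planes and delivers the leading constant $1$ for free. (Note also that the paper's bad set is strictly smaller: a plane on which $X\cap H$ is reducible or of lower degree but still has $\geq a_1$ points is \emph{good} in the paper's sense, so an honest Bertini count would actually overcount the bad planes relative to what is needed.) To repair your argument you would either need to prove an unusually sharp Bertini estimate, or switch to the variance argument the paper uses.
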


We give a version with an explicit lower bound as well.

\begin{thm}
Let $X\subset\A^n_{\F_q}$ be a geometrically irreducible hypersurface of degree $d$. Suppose that $q>15d^{13/3}$. Then
\begin{equation}
|X(\F_q)|\geq q^{n-1}-(d-1)(d-2)q^{n-3/2}-(d+0.6)q^{n-2}.
\end{equation}
\label{thm:explicit_lower_bound}
\end{thm}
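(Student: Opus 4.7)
The plan is to follow the random plane-section argument underlying Theorem 4 of \cite{Slavov_FFA} (which, as the author notes, secretly delivers the sharper asymptotic bound \eqref{eq:lower_bound}), tracking every constant explicitly so that the hypothesis $q > 15 d^{13/3}$ produces the clean slack $0.6$ above the main term $d - 1$ supplied by Aubry--Perret.

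First, I would parameterize the planes $L \subset \A^n$ by pairs $(P, V) \in \A^n(\F_q) \times \mathrm{Gr}(2, n)(\F_q)$ via $L = P + V$. Writing $G = |\mathrm{Gr}(2, n)(\F_q)|$ and double-counting pairs $(P, V)$ with $P \in X(\F_q)$ gives
\[|X(\F_q)| \cdot G \;=\; \sum_L |(X \cap L)(\F_q)|,\]
the sum ranging over the $G q^{n-2}$ planes $L \subset \A^n$. Call $L$ \emph{good} if $X \cap L$ is a geometrically irreducible plane curve of degree $d$, and \emph{bad} otherwise. For good $L$, Aubry--Perret's inequality \eqref{eq:Aubry_Perret} yields $|(X \cap L)(\F_q)| \geq q - (d-1)(d-2)\sqrt{q} - (d-1)$; for bad $L$, use $|(X \cap L)(\F_q)| \geq 0$. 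Letting $M_b$ denote the number of bad planes, these combine with the identity above to
\[|X(\F_q)| \;\geq\; \bigl(q^{n-2} - M_b/G\bigr)\bigl(q - (d-1)(d-2)\sqrt{q} - (d-1)\bigr),\]
and a direct expansion shows that the desired bound follows once we can guarantee
\[M_b/G \;\leq\; 1.6 \, q^{n-3},\]
the remaining cross-terms being positive and absorbed under the hypothesis $q > 15 d^{13/3}$.

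The substance of the proof is therefore this effective Bertini estimate on $M_b$. I would pick $V \in \mathrm{Gr}(2, n)(\F_q)$ via averaging or pigeonhole over directions and count the $P \in \A^n/V$ for which $X \cap (P+V)$ fails to be a geometrically irreducible curve of degree $d$. These $P$ form the $\F_q$-points of the reducibility and low-degree locus of the pencil $\{X \cap (P+V)\}_P$ of plane curves, a proper closed subset of $\A^{n-2}$ whose degree is polynomial in $d$ via standard resultant and discriminant computations (in the style of Cafure--Matera \cite{Cafure_Matera}). The threshold $q > 15 d^{13/3}$ is chosen precisely so that the $d$-dependent constants coming out of this Bertini step are dominated by the extra factor of $q$ in the ratio $M_b/G$, yielding the required inequality $M_b/G \leq 1.6 \, q^{n-3}$.

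The main obstacle is the sharp tracking of constants in the Bertini step: since the slack $0.6$ is quite tight, every lower-order error in the discriminant count must be carefully measured against the threshold $15 d^{13/3}$. All of the other steps --- the double counting, the application of Aubry--Perret, and the algebraic rearrangement --- are routine manipulations.
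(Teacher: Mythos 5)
Your overall skeleton is right --- double-count point-plane incidences, split planes into ``good'' (where Aubry--Perret gives a lower bound) and ``bad'' (contributing $\geq 0$), and reduce the theorem to a bound on the number of bad planes --- but the method you propose for bounding the bad planes is the wrong one, and the paper's actual mechanism is a crucial ingredient you have left out.

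You want to prove $M_b/(G q^{n-2}) \leq 1.6/q$ by an effective Bertini argument: fix a direction $V$, cut out the reducibility/degree-drop locus in the translate parameter $P$ by resultants and discriminants, and use a Schwartz--Zippel-type count. The problem is that this kind of argument bounds the bad locus for a single $V$ by $D(d)\, q^{n-3}$, where $D(d)$ is the degree of an auxiliary variety built from discriminants and resultants; any reasonable version of this gives $D(d)$ growing like a power of $d$, not a constant $\leq 1.6$. Averaging over $V$ does not rescue this, since the bad locus generically has positive degree, and the constant $1.6$ is too tight to be an artifact of degree bookkeeping. The $q > 15d^{13/3}$ hypothesis is not, as you guessed, the place where Bertini constants get absorbed; it is literally the Cafure--Matera threshold, and the paper uses it only so that it may invoke the \emph{already-known} explicit two-sided estimate $|N - q^{n-1}| \leq (d-1)(d-2)q^{n-3/2} + (5d^2{+}d{+}1)q^{n-2}$ from \cite{Cafure_Matera}.

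The paper's actual route to bounding the bad-plane probability is a second-moment (Chebyshev) argument, which you never mention. Using Lemma 10 of \cite{Slavov_FFA}, the random variable $\#(X\cap H)(\F_q)$ over a uniformly random plane $H$ has mean $\mu = N/q^{n-2}$ and variance $\sigma^2 \leq \mu$. A plane is ``bad'' (in the paper's sense: $\#(X\cap H)(\F_q) \in [0, d^2/4]$, equivalently $X\cap H$ has no geometrically irreducible $\F_q$-irreducible component --- a \emph{weaker} condition than your requirement that $X\cap H$ be irreducible of full degree $d$, and sufficient because a single geometrically irreducible $\F_q$-component of degree $e \leq d$ already satisfies Aubry--Perret) precisely when $\#(X\cap H)(\F_q)$ deviates from $\mu$ by at least $\mu - d^2/4$. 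Plugging the Cafure--Matera bounds on $N$ into $\sigma^2 \leq \mu \leq (8.44/7.44)q$ and $\mu - d^2/4 \geq (6.44/7.44)q$, Chebyshev gives bad probability $\leq \sigma^2/(\mu - d^2/4)^2 < 1.6/q$ --- a bound that is genuinely independent of $d$, which is exactly why the final constant $0.6$ can be so tight. Without the Chebyshev step (and the Cafure--Matera input that feeds it), the argument does not close; this is the missing idea.
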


\begin{Exa}
As in Example \ref{Exa:Hermitian}, let $d\geq 3$ be such that $q_0\colonequals d-1$ is a prime power. The curve 
$\{y^{d-1}z+yz^{d-1}=x^d\}$ in $\P^2$ over 
$\F_{q_0}$ intersects the line $x=0$ at $d$ distinct points defined over an extension $\F_{q_1}$ of $\F_{q_0}$. Let $q$ be an even power of $q_1$. Then the affine curve 
$C\colonequals \{y^{d-1}z+yz^{d-1}=1\}$ in $\A^2_{\F_q}$ satisfies $\#C(\F_q)=q-(d-1)(d-2)\sqrt{q}-d+1$. Consequently, the number of $\F_q$-points on the hypersurface $C\times\A^{n-2}$ in $\A^n$ is
$q^{n-1}-(d-1)(d-2)q^{n-3/2}-(d-1)q^{n-2}$.
\label{Exa:lower_affine}
\end{Exa}

In fact, the proofs of Theorems \ref{thm:pi} and \ref{thm:lower_bound} give an algorithm that takes as input a half-integer $r\geq 0$ and
constants\footnote{We refer to $C_d^{(j)}$ and $D_d^{(j)}$ interchangeably as constants or as functions of $d$ depending on the context.} $C_d^{(j)}$ and 
$D_d^{(j)}$
for each half-integer $1/2\leq j\leq r$ such that 
\[|X(\F_q)|\leq q^{n-1}+\sum_{j=1/2}^r C_d^{(j)} q^{n-1-j}+O_d(q^{n-r-3/2})
\quad\qquad\text{(summation over half-integers)}\] 
and
\[|X(\F_q)|\geq q^{n-1}-\sum_{j=1/2}^r D_d^{(j)} 
q^{n-1-j}-O_d(q^{n-r-3/2})
\quad\qquad\text{(summation over half-integers)},\]
and returns as output four additional $C_d^{(r+1/2)}$, $C_d^{(r+1)}$, $D_d^{(r+1/2)}$, and $D_d^{(r+1)}$  such that 
\[|X(\F_q)|\leq q^{n-1}+\sum_{j=1/2}^{r+1} C_d^{(j)} q^{n-1-j}+O_d(q^{n-r-5/2})
\quad\qquad\text{(summation over half-integers)}\]
and
\[|X(\F_q)|\geq q^{n-1}-\sum_{j=1/2}^{r+1} D_d^{(j)} q^{n-1-j}-O_d(q^{n-r-5/2})
\quad\qquad\text{(summation over half-integers)}.\]
Initiating the algorithm with $r=0$ and the rather weak version
\[q^{n-1}-O_d(q^{n-3/2})\leq |X(\F_q)|\leq q^{n-1}+O_d(q^{n-3/2})\] of \eqref{eq:Lang_Weil_classical}, we obtain \eqref{eq:pi} and \eqref{eq:lower_bound}.  
In turn, taking \eqref{eq:pi} and \eqref{eq:lower_bound} as input, we obtain

\begin{Cor}
Let $X\subset\A^n_{\F_q}$ be a geometrically irreducible hypersurface of degree $d$. Then
\begin{multline}
|X(\F_q)|\geq q^{n-1}-(d-1)(d-2)q^{n-3/2}-dq^{n-2}
-2(d-1)(d-2)q^{n-5/2}\\
-(2(d-1)^2(d-2)^2+d^2/2+d+2+\pi^2/6)q^{n-3}-O_d(q^{n-7/2}).
\label{eq:long_lower_bound}
\end{multline}
\label{Cor:long}
\end{Cor}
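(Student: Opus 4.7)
The corollary is obtained by one more iteration of the recursive algorithm outlined in the paragraph immediately preceding its statement, starting from Theorems \ref{thm:pi} and \ref{thm:lower_bound} as the input. That input corresponds to the level $r=1$, with coefficients
\[C_d^{(1/2)}=D_d^{(1/2)}=(d-1)(d-2),\qquad C_d^{(1)}=1+\pi^2/6,\qquad D_d^{(1)}=d.\]
The algorithm then produces four additional coefficients $C_d^{(3/2)},C_d^{(2)},D_d^{(3/2)},D_d^{(2)}$ and the corresponding upper and lower bounds at level $r=2$, with error $O_d(q^{n-7/2})$. The lower bound among these is exactly the content of the corollary, so the plan reduces to tracking the values of $D_d^{(3/2)}$ and $D_d^{(2)}$ through the recursion.

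Concretely, I would first write down, in symbolic form, the output $D_d^{(3/2)}$ and $D_d^{(2)}$ produced by the algorithm as functions of the four input coefficients $C_d^{(1/2)},C_d^{(1)},D_d^{(1/2)},D_d^{(1)}$, together with the intrinsic combinatorial contributions in $d$ (counts of non-generic 2-plane sections, singular loci, reducible components, etc.). Second, I would substitute the numerical input above. Third, I would compare the result with the claimed $D_d^{(3/2)}=2(d-1)(d-2)$ and $D_d^{(2)}=2(d-1)^2(d-2)^2+d^2/2+d+2+\pi^2/6$, checking the match term by term and absorbing everything of order $q^{n-7/2}$ and lower into the implicit $O_d$-constant.

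The main obstacle is keeping straight the interaction between upper and lower inputs. In the Bertini-type averaging step on which the algorithm is based, one expresses $|X(\F_q)|$ as an average of $|X(\F_q)\cap\Pi(\F_q)|$ over random $2$-planes $\Pi\subset\A^n_{\F_q}$, applies the Aubry--Perret estimate \eqref{eq:Aubry_Perret} for generic $\Pi$, and handles the non-generic planes via a subtraction step in which an \emph{upper} bound on $|X(\F_q)\cap\Pi(\F_q)|$ is needed in order to produce a \emph{lower} bound on $|X(\F_q)|$. This is precisely the mechanism by which $C_d^{(1)}=1+\pi^2/6$ feeds into $D_d^{(2)}$ (with a positive sign), and it is the step where the accounting must be done most carefully. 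The quadratic term $2(d-1)^2(d-2)^2$ in $D_d^{(2)}$ is expected to come from chaining two Aubry--Perret applications through the recursion, while the lower-order pieces $d^2/2$ and $d+2$ come from counting non-generic plane sections; verifying that the constants emerge exactly as stated is the only non-mechanical aspect of the argument.
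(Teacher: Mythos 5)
Your high-level plan is right: the corollary is obtained by running the recursive algorithm once with Theorems \ref{thm:pi} and \ref{thm:lower_bound} as input at level $r=1$, and the identification of the input coefficients is correct. But the specific \emph{mechanism} you describe is not the one the paper uses, and the place where the upper bound feeds in is misidentified, so as written the plan would send you in the wrong direction when you tried to extract the coefficients.

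You claim that the upper bound enters because ``an upper bound on $|X(\F_q)\cap\Pi(\F_q)|$ is needed in order to produce a lower bound on $|X(\F_q)|$'' when subtracting off non-generic planes. That is not what happens. In the proof of Theorem \ref{thm:lower_bound} there is no subtraction over bad planes and no need for an upper bound on the bad-plane fibers (the crude estimate $\#(X\cap H)(\F_q)\le d^2/4$ from $b_0$ is already in place and is not refined by the recursion). The lower bound is produced by $N/q^{n-2}=\mu\ge(1-p_{\mathrm{bad}})\,a_1$, and the only thing that must be sharpened is $p_{\mathrm{bad}}$, via Chebyshev: $p_{\mathrm{bad}}\le\sigma^2/\bigl(N/q^{n-2}-d^2/4\bigr)^2$. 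Here the upper bound on $N$ from \eqref{eq:pi} enters \emph{only} through the variance bound $\sigma^2\le N/q^{n-2}$ (Lemma 10 of \cite{Slavov_FFA}), while the lower bound on $N$ from \eqref{eq:lower_bound} enters by bounding the denominator $N/q^{n-2}-d^2/4$ from below. This is exactly the content of the paper's one-line proof of the corollary, and it is what makes $C_d^{(1)}=1+\pi^2/6$ appear with a plus sign inside $D_d^{(2)}$. Your plan also stops short of carrying out the expansion: you defer to ``write down in symbolic form, substitute, compare,'' but the matching of $2(d-1)^2(d-2)^2+d^2/2+d+2+\pi^2/6$ against the claimed coefficient is exactly where one must do the two-term Taylor expansion of $(1+A q^{-1/2}+Bq^{-1})/(1-Aq^{-1/2}-Cq^{-1})^2$ (with $A=(d-1)(d-2)$, $B=1+\pi^2/6$, $C=d+d^2/4$), multiply by $a_1=q-A\sqrt q-d+1$, and collect terms. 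Until that is done, the proposal has not verified the statement, and the incorrect picture of where the upper bound is used would make that bookkeeping go astray.
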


A lower Lang--Weil bound can be useful in proving
that a geometrically irreducible hypersurface 
$X\subset\A^n_{\F_q}$ has an $\F_q$-rational point. It is known (see Theorem 5.4 in \cite{Cafure_Matera} and its proof) that if $q>1.5d^4+\cdots$, then 
$X(\F_q)\neq\emptyset$. Notice that the approximation term 
$q^{n-1}$
in \eqref{eq:long_lower_bound} dominates the remaining explicit terms already for $q>d^4+\cdots$. Based on this heuristic, we state 

\begin{Conj}
There exists an effectively computable function $g_1(d)=o(d^4)$ as $d\to\infty$ with the following property. 
Let $X\subset\A^n_{\F_q}$ be a geometrically irreducible hypersurface of degree $d$. Then $X(\F_q)\neq\emptyset$ as long as $q>d^4+g_1(d)$. 
\end{Conj}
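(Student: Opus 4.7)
My strategy is to push the iterative algorithm described after Example \ref{Exa:lower_affine} far enough to obtain a lower bound whose right-hand side can be certified, term by term, to be strictly positive whenever $q>d^4+g_1(d)$ with $g_1(d)=o(d^4)$. Corollary \ref{Cor:long} is the first nontrivial output in this direction; the conjecture amounts to the assertion that the algorithm, applied sufficiently many times and with careful bookkeeping of all implicit constants, crosses the heuristic threshold $q\approx d^4$ that is predicted by the dominant error coefficient $(d-1)(d-2)q^{n-3/2}\approx d^{2}q^{n-3/2}$.

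Concretely, I would proceed in three steps. First, I would prove by induction on the number $r$ of iterations an explicit polynomial bound $D_d^{(j)}\leq P_j(d)$ with $\deg P_j\leq\alpha j+\beta$ for some universal constants $\alpha<4$ and $\beta$; the values $D_d^{(1/2)}=(d-1)(d-2)$, $D_d^{(1)}=d$, $D_d^{(3/2)}=2(d-1)(d-2)$, and the quartic $D_d^{(2)}$ from Corollary \ref{Cor:long} are consistent with $\alpha=2$. Second, I would select the depth of the recursion $R=R(d)$ as a slowly growing function of $d$ (for instance $R=\lfloor\log\log d\rfloor$) so that the running remainder $O_d(q^{n-R-3/2})$ becomes negligible compared with $q^{n-1}$ at values of $q$ slightly above $d^4$. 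Third, I would evaluate the resulting explicit expansion at $q=d^4+g_1(d)$ and use the geometric decay of the ratios $d^{\alpha j+\beta}/d^{4j}$, valid because $\alpha<4$, to bound the total error by $(1-\delta)\,q^{n-1}$ for some $\delta>0$, thereby forcing $|X(\F_q)|>0$.

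The main obstacle will be the first step: controlling the growth of $D_d^{(j)}$ under the recursion. Each iteration of the algorithm packages the random-plane probability estimate together with the Aubry--Perret bound in dimension two, and can in principle compose previous error constants with each other, potentially causing rapid degree blowup in $d$. What one really needs is a structural bound showing that a plane meeting $X$ in a curve of atypically low degree (for instance, when the plane is non-generic) contributes an error term whose exponent of $q$ does not depend on the iteration depth. A related, technically delicate point is that the implicit $O_d$-constant at depth $R$ must be controlled by a polynomial in $d$ whose degree grows strictly slower than $4R$; otherwise, increasing $R$ far enough to close the gap forces $g_1(d)$ to become comparable with $d^4$, and the conjecture is not established. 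Making both points quantitative would reduce the conjecture to a sharp, fully explicit version of the probabilistic Bertini technique underlying Theorems \ref{thm:pi} and \ref{thm:lower_bound}, a refinement which I regard as the central technical task.
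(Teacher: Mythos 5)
The statement you are addressing is labelled \texttt{Conj} in the paper and is genuinely open: the paper offers no proof, only the brief heuristic you reproduce in your opening paragraph (that the approximation term $q^{n-1}$ in the expansion of Corollary~\ref{Cor:long} dominates the explicit error terms once $q>d^4+\cdots$). What you have written is therefore not a proof but an expanded version of that heuristic together with a programme for closing it. You correctly locate the central obstruction — controlling the $d$-growth of the coefficients $D_d^{(j)}$ and, more seriously, of the implicit constant in the remainder $O_d(q^{n-R-3/2})$ as the recursion depth $R$ increases — and you say explicitly that you have not resolved it. That is the gap: without a proved polynomial bound on those constants with degree strictly below $4$ per half-step, the argument does not terminate in a usable inequality near $q\approx d^4$.

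A few further points you should take on board. First, the coefficient degrees you list from Corollary~\ref{Cor:long} are consistent with $\alpha=2$, but that pattern is an empirical observation at depth $r=1$, not an induction; each iteration of the algorithm composes the previous upper bound (feeding the variance estimate $\sigma^2\leq N/q^{n-2}$) with the previous lower bound (feeding the distance-from-mean estimate~\eqref{eq:distance_from_mean}), and nothing in the paper rules out that these compositions eventually produce degrees approaching $4j$. Second, the Chebyshev estimate~\eqref{eq:key_prob_estimate} carries its own $O(q^{-3/2})$ error whose constant depends on $d$ and is summed over $k\in\{2,\dots,d\}$ in the Abel step; your accounting tracks only the main coefficients, not these accumulated probability errors, which are part of the same bookkeeping problem. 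Third, the asymptotic expansion the algorithm produces is valid only under side conditions such as the pairwise disjointness of $J_1,\dots,J_d$, which already forces roughly $q\gtrsim 4d^4$; the regime $q=d^4+o(d^4)$ is precisely where those side conditions become delicate, so one cannot simply substitute $q=d^4+g_1(d)$ into a formula derived under $q\gg_d 1$. Finally, the currently known threshold for nonemptiness via Cafure--Matera is $q>1.5d^4+\cdots$, and the conjecture asks to shave the constant $1.5$ down to $1$; whether the random-plane-slicing machine, driven by the Aubry--Perret input~\eqref{eq:Aubry_Perret}, is sharp enough to do this is exactly what is unknown, and your proposal does not decide it.
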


In contrast to the upper bounds, all lower bounds above 
(including \eqref{eq:Aubry_Perret} and Example \ref{Exa:lower_affine}) contain a $d$ in the coefficient of $q^{n-2}$. This discrepancy disappears if we work in projective space. 

\begin{thm}
Let $X\subset\P^n_{\F_q}$ be a geometrically irreducible hypersurface of degree $d$. Then
\begin{align*}
|X(\F_q)|&\geq q^{n-1}-(d-1)(d-2)q^{n-3/2}-O_d(q^{n-5/2})
\qquad\text{and}\\
|X(\F_q)|&\leq q^{n-1}+(d-1)(d-2)q^{n-3/2} +(1+\pi^2/6)q^{n-2}+O_d(q^{n-5/2}).
\end{align*}
\label{thm:projective}
\end{thm}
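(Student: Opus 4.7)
The plan is to adapt the random-plane technique underlying Theorems~\ref{thm:pi} and~\ref{thm:lower_bound} to the projective setting, using random \emph{projective} planes $P\subset\P^n$ in place of random affine planes. The central observation is that for an absolutely irreducible projective plane curve $C\subset\P^2$ of degree $d$, Weil's bound
\[q+1-(d-1)(d-2)\sqrt{q}-O_d(1)\leq |C(\F_q)|\leq q+1+(d-1)(d-2)\sqrt{q}\]
is essentially symmetric about $q+1$, with no ``$-d$'' loss in the lower bound. The corresponding affine Aubry--Perret bound~\eqref{eq:Aubry_Perret} carries an additional $-(d-1)$ on the lower side; this is precisely the origin of the $-dq^{n-2}$ term in Theorem~\ref{thm:lower_bound}, and its absence projectively is exactly what removes the $q^{n-2}$ contribution from the projective lower bound.

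Concretely, the starting point is the double-counting identity
\[\sum_{P\subset\P^n\text{ a plane}}|X\cap P|(\F_q)=|X(\F_q)|\cdot \binom{n}{2}_q,\]
valid because each $\F_q$-point of $\P^n$ lies on exactly $\binom{n}{2}_q$ two-dimensional linear subspaces. Following the Bertini-type analysis of \cite{Slavov_FFA}, all but a proper subvariety $B$ of the Grassmannian of planes give an intersection $X\cap P$ that is absolutely irreducible of degree $d$; on this ``good'' locus $G$, the projective Weil bound applies directly, while on $B$ one decomposes $X\cap P$ into at most $d$ components and applies Weil to each.

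For the lower bound, since bad planes contribute nonnegatively,
\[|X(\F_q)|\geq \frac{|G|}{\binom{n}{2}_q}\bigl(q+1-(d-1)(d-2)\sqrt{q}\bigr)-O_d(q^{n-5/2})=q^{n-1}+q^{n-2}-(d-1)(d-2)q^{n-3/2}-O_d(q^{n-5/2}),\]
and discarding the nonnegative $q^{n-2}$ term yields the claim. For the upper bound, the same identity produces $q^{n-1}+q^{n-2}+(d-1)(d-2)q^{n-3/2}$ from the good planes, plus a positive contribution from $B$. Stratifying $B$ by the degree partition of the irreducible decomposition of $X\cap P$ and summing a series involving $\zeta(2)=\sum_{k\geq 1}1/k^2=\pi^2/6$, exactly as in the proof of Theorem~\ref{thm:pi}, produces an additional $(\pi^2/6)q^{n-2}$ contribution, giving a total of $(1+\pi^2/6)q^{n-2}$.

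The main obstacle is the precise accounting in the upper bound: extracting the sharp constant $1+\pi^2/6$ (rather than a crude $O_d(1)$) requires the reducibility-type stratification of the Grassmannian and the series summation that evaluates to $\zeta(2)$, in direct parallel with the delicate combinatorial calculation performed for the affine setting in the proof of Theorem~\ref{thm:pi}. The lower bound, by contrast, is essentially immediate once the Bertini input is in place; no new combinatorial identities are required.
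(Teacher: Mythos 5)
Your overall approach—slicing with random projective planes and double-counting, then controlling reducible slices—is the same as the paper's, which adapts the proof of Theorems~\ref{thm:pi} and \ref{thm:lower_bound} to $\P^n$. But there are two concrete gaps.

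First, the lower-bound bookkeeping is quantitatively wrong. You claim the intermediate inequality
\[
\frac{|G|}{\binom{n}{2}_q}\bigl(q+1-(d-1)(d-2)\sqrt{q}\bigr)-O_d(q^{n-5/2})=q^{n-1}+q^{n-2}-(d-1)(d-2)q^{n-3/2}-O_d(q^{n-5/2}),
\]
and then "discard the nonnegative $q^{n-2}$ term." For that display to hold you need $|G|/\binom{n}{2}_q=q^{n-2}+O(q^{n-7/2})$, equivalently $|B|/\binom{n+1}{3}_q=O(q^{-3/2})$. But the only estimate on the bad fraction available from the second-moment method underlying \cite{Slavov_FFA}—which is the ``Bertini-type'' input you invoke—is $O(q^{-1})$ (Chebyshev applied to the variance). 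With $|B|/\binom{n+1}{3}_q\approx q^{-1}$ one has $|G|/\binom{n}{2}_q=q^{n-2}-q^{n-3}-O(q^{n-7/2})$, so the $(q+1)$ factor produces $q^{n-1}-O(q^{n-5/2})$, not $q^{n-1}+q^{n-2}-O(q^{n-5/2})$: the ``$+1$'' in the projective Aubry--Perret bound is spent paying for the bad planes rather than giving a free $q^{n-2}$ cushion. You still land on the theorem's bound, but the mechanism is a cancellation, and the intermediate claim as written is unjustified (and I don't see how to get it from anything in the paper).

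Second, you do not verify the variance estimate that powers all the Chebyshev/$\zeta(2)$ estimates. In the affine case, $\sigma^2\le\mu$ follows from (near-)pairwise independence of incidences for a uniformly random affine plane. In the projective case there is no translation structure, and the paper explicitly introduces $\rho_1=(q^3-1)/(q^{n+1}-1)$ and $\rho_2=\binom{n-1}{1}_q/\binom{n+1}{3}_q$, checks directly that $\rho_2\le\rho_1^2$, and then uses $\mu^2+\sigma^2=\mu+N(N-1)\rho_2$ to conclude $\sigma^2\le\mu$. Without this step the probability estimates $p_k+\cdots+p_d\le (k-1)^{-2}q^{-1}+O(q^{-3/2})$ that you are implicitly relying on to produce the $\pi^2/6$ are not available. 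Finally, note that the ``bad set'' in this technique is just a set of planes with a cardinality bound coming from Chebyshev; it is not a subvariety of the Grassmannian, and reframing it as one (``all but a proper subvariety $B$'') does not by itself supply a quantitative estimate of the needed strength.
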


\begin{Exa}[Cone over a maximal curve]
Let $(d,q_0)$ be such that there exists a (nonsingular) maximal curve $C=\{f=0\}$ in $\P^2$ over $\F_{q_0}$ of degree $d$. Let $q$ be a power of $q_0$ and let $X=\{f=0\}\subset\P^n_{\F_q}$ be a projective cone over $C$. Then
\[\#X(\F_q)=q^{n-1}\pm (d-1)(d-2)q^{n-3/2}+q^{n-2}+q^{n-3}+\cdots+1,\] with $\pm$ depending on whether $q$ is an odd or an even power of $q_0$.
Thus the gap between what is achieved in this example and what is established in Theorem \ref{thm:projective} is $q^{n-2}+O_d(q^{n-5/2})$ in the case of the lower bound and $(\pi^2/6)q^{n-2}+O_d(q^{n-5/2})$ in the case of the upper bound.
\end{Exa}

This paper builds upon the author's 
earlier work \cite{Slavov_FFA} and is inspired by T.~Tao's discussion \cite{Tao_blog} of the Lang--Weil bound through random sampling and the idea of Cafure--Matera \cite{Cafure_Matera} to slice $X$ with planes. A {\it plane} is a $2$-dimensional affine linear subvariety of $\A^n_{\F_q}$. If $H\subset\A^n_{\F_q}$ is any plane, then $\#(X\cap H)(\F_q)$ is either $q^2$, $0$, or $\approx kq$, where $k$ is the number of geometrically irreducible $\F_q$-irreducible components of $X\cap H$. For $0\leq k\leq d$, we exhibit a small interval $I_k=[a_k,b_k]$ containing $kq$ so that if we also define 
$I_\infty=\{q^2\}$, then each $\#(X\cap H)(\F_q)$ belongs to $\bigcup I_k$. 

\bigskip

\hspace*{-0.5cm}\begin{tikzpicture}
\draw[very thick](0,0)--(15,0);
\fill[green](0,0) circle (3pt);
\fill[green](2,0) circle (3pt);
\fill[green](6,0) circle (3pt);
\fill[green](10,0) circle (3pt);
\fill[green](15,0) circle (3pt);

\draw[ultra thick, green] (15,0)--(15,1);
\draw[ultra thick, green](0,0)--(0,1)--(0.4,1)--(0.4,0);
\draw[ultra thick, green](1.6,0)--(1.6,1)--(2.4,1)--(2.4,0);
\draw[ultra thick, green](5.6,0)--(5.6,1)--(6.4,1)--(6.4,0);
\draw[ultra thick, green](9.6,0)--(9.6,1)--(10.4,1)--(10.4,0);

\fill[green](0.4,0) circle (3pt);
\fill[green](1.6,0) circle (3pt);
\fill[green](2.4,0) circle (3pt);
\fill[green](5.6,0) circle (3pt);
\fill[green](6.4,0) circle (3pt);
\fill[green](9.6,0) circle (3pt);
\fill[green](10.4,0) circle (3pt);

\node at (1.5,-0.35){$a_1$};
\node at (2,-0.35){$q$};
\node at (2.6,-0.35){$b_1$};

\node at (5.4,-0.35){$a_k$};
\node at (6.7,-0.35){$b_k$};

\node at (0.4,-0.35){$b_0$};
\node at (0,-0.35){$0$};
\node at (6,-0.35){$kq$};
\node at (10,-0.35){$dq$};
\node at (14.8,-0.35){$q^2$};

\node at (0.2,0.5) {$I_0$};
\node at (2,0.5) {$I_1$};
\node at (4,0.5){$\dots$};
\node at (6,0.5) {$I_k$};
\node at (8,0.5) {$\dots$};
\node at (10,0.5) {$I_d$};
\node at (14.7,0.5) {$I_\infty$};
\end{tikzpicture}

\bigskip

The problem when it comes to the upper bound is that when $k$ is large, planes $H$ with
$\#(X\cap H)(\F_q)\in I_k$ contribute significantly towards the count $\#X(\F_q)$. However, it turns out that 
the number of such $H$'s decreases quickly as $k$ grows. 

\section{A collection of small intervals}

\begin{Lem}[\cite{Schmidt}, Lemma 5] 
Let $C\subset\A^2_{\F_q}$ be a curve of degree $d$. Let $k$ be the number of geometrically irreducible $\F_q$-irreducible components of $C$. Then
\[|\#C(\F_q)-kq|\leq
(d-1)(d-2)\sqrt{q}+d^2+d+1.\]
\label{Lem:bounds_near_kq}
\end{Lem}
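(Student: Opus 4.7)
The plan is to decompose $C$ into its $\F_q$-irreducible components and handle the geometrically irreducible ones with Aubry--Perret, the remaining ones with B\'ezout, and assemble via inclusion--exclusion. Write the reduced subscheme $C^{\mathrm{red}}=C_1\cup\cdots\cup C_s$, with each $C_i$ an $\F_q$-irreducible plane curve of degree $d_i\geq 1$ (multiplicities in $C$ do not affect the $\F_q$-point count), so $\sum d_i\leq d$. Relabel so that $C_1,\ldots,C_k$ are geometrically irreducible and $C_{k+1},\ldots,C_s$ are not.

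First I would estimate $\#C_i(\F_q)$ component by component. For $i\leq k$, the Aubry--Perret inequality \eqref{eq:Aubry_Perret} gives
\[ |\#C_i(\F_q)-q|\leq (d_i-1)(d_i-2)\sqrt{q}+(d_i-1). \]
For $i>k$, the curve $C_i$ splits over $\overline{\F_q}$ into $m_i\geq 2$ Galois-conjugate geometrically irreducible curves $D^{(1)},\ldots,D^{(m_i)}$, each of degree $d_i/m_i$. Any $\F_q$-point of $C_i$ is Frobenius-fixed, hence lies on \emph{every} $D^{(j)}$; applying B\'ezout to any two distinct $D^{(j)},D^{(j')}$ yields $\#C_i(\F_q)\leq (d_i/m_i)^2\leq d_i^2/4$.

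Next I would glue via inclusion--exclusion,
\[ \sum_i\#C_i(\F_q)-\sum_{i<j}\#(C_i\cap C_j)(\F_q)\leq\#C(\F_q)\leq \sum_i\#C_i(\F_q), \]
and use B\'ezout on distinct irreducible components, $\#(C_i\cap C_j)(\F_q)\leq d_id_j$, so the inclusion--exclusion correction is at most $\sum_{i<j}d_id_j\leq\binom{d}{2}$. To consolidate the per-component bounds, I would invoke the convexity of $f(x)=(x-1)(x-2)$ together with $d_i\geq 1$ and $\sum d_i\leq d$ to conclude $\sum_{i\leq k}(d_i-1)(d_i-2)\leq (d-1)(d-2)$, and likewise $\sum_{i\leq k}(d_i-1)\leq d-k$.

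The main obstacle---really just careful bookkeeping---is corralling the three separate error sources (the $d_i-1$ tail in Aubry--Perret, the $d_i^2/4$ contribution from the geometrically reducible components, and the B\'ezout correction $\binom{d}{2}$ from inclusion--exclusion) into the clean final constant $d^2+d+1$. The crude sums above come out to roughly $d+d^2/4+d(d-1)/2$, comfortably within the advertised bound, so any reasonable accounting succeeds; the delicate step is the Frobenius-fixed-point argument ensuring that geometrically reducible $\F_q$-irreducible components contribute only at the B\'ezout-type level rather than near $q$.
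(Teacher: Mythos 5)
Your argument is correct. Note, however, that the paper does not prove this lemma at all---it simply cites it as Lemma~5 of Schmidt's 1974 paper---so there is no in-text proof to compare against. Your self-contained derivation (decompose into $\F_q$-irreducible components, apply Aubry--Perret to the geometrically irreducible ones, use the Frobenius/B\'ezout trick $\#C_i(\F_q)\leq (d_i/m_i)^2\leq d_i^2/4$ for the rest, and correct the overcount with the Bonferroni inequality plus B\'ezout on pairwise intersections) is a clean modern route to the statement; Schmidt's original argument predates Aubry--Perret and cannot be identical, but the shape of the bookkeeping is the same. Two small remarks. First, the claim $\sum_{i\leq k}(d_i-1)(d_i-2)\leq(d-1)(d-2)$ is true but ``convexity'' alone is not the cleanest justification; the direct inequality $(d_i-1)(d_i-2)\leq(d_i-1)(d-2)$ (valid since $1\leq d_i\leq d$), followed by summing and using $\sum(d_i-1)\leq d-1$, is more transparent (and one should dispatch $d\leq 2$ separately, where both sides vanish). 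Second, your final tally gives $|\#C(\F_q)-kq|\leq(d-1)(d-2)\sqrt q+d+\max\bigl(d^2/4,\binom{d}{2}\bigr)$, which is in fact a bit sharper than the stated $d^2+d+1$; this is consistent, since the paper's constant is not claimed to be optimal.
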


It will be crucial to give a refined upper bound when $k=1$.

\begin{Lem}
Let $C\subset\A^2_{\F_q}$ be a curve of degree $d$. 
Suppose that $C$ has exactly one geometrically irreducible $\F_q$-irreducible component. Then
\[|C(\F_q)|\leq q+(d-1)(d-2)\sqrt{q}+1.\]
\label{Lem:important}
\end{Lem}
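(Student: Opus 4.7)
The plan is to reduce to the Aubry--Perret bound \eqref{eq:Aubry_Perret} by isolating the unique geometrically irreducible component. Decompose $C = C_1 \cup Y$, where $C_1$ (of degree $d_1\leq d$) is that component and $Y = C_2\cup\cdots\cup C_m$ collects the remaining $\F_q$-irreducible components (of degrees $d_i$, with $\sum_{i=1}^m d_i=d$); by hypothesis each $C_i$ for $i\geq 2$ fails to be geometrically irreducible. Applying \eqref{eq:Aubry_Perret} directly to $C_1$ yields $|C_1(\F_q)|\leq q+(d_1-1)(d_1-2)\sqrt q+1$.

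The key step is a Bezout-type bound on $|Y(\F_q)|$. For each $i\geq 2$, let $C_i\otimes_{\F_q}\overline{\F_q}=D_{i,1}\cup\cdots\cup D_{i,s_i}$ be the geometric decomposition, with $s_i\geq 2$; each $D_{i,j}$ has degree $d_i/s_i$, and Frobenius permutes the $D_{i,j}$ transitively (hence with no fixed point). Any $P\in C_i(\F_q)$, being Frobenius-fixed, must therefore lie on at least two distinct $D_{i,j}$; applying Bezout in $\P^2$ to each pair of geometric components,
\[|C_i(\F_q)|\leq \binom{s_i}{2}(d_i/s_i)^2 \leq \tfrac{1}{2}d_i^2.\]
Summing over $i\geq 2$ and using $\sum_{i\geq 2}d_i^2\leq \bigl(\sum_{i\geq 2}d_i\bigr)^2=(d-d_1)^2$ gives $|Y(\F_q)|\leq \tfrac{1}{2}(d-d_1)^2$.

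Combining the two estimates, the target reduces to the elementary inequality
\[(d_1-1)(d_1-2)\sqrt q+\tfrac{1}{2}(d-d_1)^2\leq (d-1)(d-2)\sqrt q.\]
Since $(d-1)(d-2)-(d_1-1)(d_1-2)=(d-d_1)(d+d_1-3)$, this amounts to $d-d_1\leq 2(d+d_1-3)\sqrt q$. For $d_1\geq 2$ one has $d-d_1\leq d+d_1-3$, while $\sqrt q\geq 1/2$ is automatic, so the claim holds. The boundary case $d_1=1$ needs a separate direct check of $(d-1)^2/2\leq (d-1)(d-2)\sqrt q + 1$ (the case $d=2$ cannot occur, since a non-geometrically-irreducible component must have degree at least $2$); this is easy for $d\geq 3$. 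I anticipate the only delicate point to be exactly this final verification: one must make sure the crude quadratic-in-$(d-d_1)$ bound on $|Y(\F_q)|$ is absorbed uniformly by the Aubry--Perret gap, with no assumption on $q$ beyond $q\geq 2$.
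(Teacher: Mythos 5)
Your proof is correct and follows the same overall strategy as the paper: isolate the unique geometrically irreducible component $C_1$, apply the Aubry--Perret bound \eqref{eq:Aubry_Perret} to it, bound the $\F_q$-points on the remaining (non-geometrically-irreducible) $\F_q$-components by a Bezout-type estimate, and close with an elementary inequality comparing $(d_1-1)(d_1-2)\sqrt q$ to $(d-1)(d-2)\sqrt q$. The one genuine difference is that the paper cites Lemma~2.3 of Cafure--Matera for the bound $|C_i(\F_q)|\leq (\deg C_i)^2/4$, whereas you derive a self-contained bound $|C_i(\F_q)|\leq \binom{s_i}{2}(d_i/s_i)^2\leq d_i^2/2$ from a direct Bezout-and-Frobenius argument. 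Your constant $1/2$ is weaker than the cited $1/4$, but as you verify, the slack in the Aubry--Perret gap $(d-d_1)(d+d_1-3)\sqrt q$ is ample enough to absorb the factor-of-two loss; your final inequality $d-d_1\leq 2(d+d_1-3)\sqrt q$ and your handling of the edge case $d_1=1$ (with the correct observation that $d=2$ is then impossible, paralleling the paper's note that $(d,e)\neq(2,1)$) are both sound. Incidentally, you could recover the sharper constant $1/4$ from your own argument: a Frobenius-fixed point $P\in D_{i,j}$ lies in $\mathrm{Frob}^k(D_{i,j})$ for every $k$, and transitivity then forces $P\in\bigcap_j D_{i,j}$, so already $|C_i(\F_q)|\leq |D_{i,1}\cap D_{i,2}|\leq (d_i/s_i)^2\leq d_i^2/4$. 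One cosmetic slip: in the $d_1=1$ check you wrote the target as $(d-1)^2/2\leq(d-1)(d-2)\sqrt q+1$, but the $+1$ should not appear (it cancels on both sides); the inequality holds regardless for $d\geq 3$, so nothing breaks.
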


\begin{proof}
Let $C_1,\dots,C_s$ be the $\F_q$-irreducible components of 
$C$. Suppose that $C_1$ is geometrically irreducible, but $C_i$ is not for $i\geq 2$. Let $e=\deg(C_1)$. Note that $(d,e)\neq (2,1)$. 

Using the Aubry--Perret bound \eqref{eq:Aubry_Perret} for $C_1$ and Lemma 2.3 in \cite{Cafure_Matera} for each $C_i$ with $i\geq 2$, we estimate
\begin{align*}
|C(\F_q)| 
&\leq |C_1(\F_q)|+\sum_{i=2}^s |C_i(\F_q)|\\
&\leq q+(e-1)(e-2)\sqrt{q}+1+\sum_{i=2}^s (\deg C_i)^2/4\\
&\leq q+(e-1)(e-2)\sqrt{q}+1+(d-e)^2/4\\
&\leq q+(d-1)(d-2)\sqrt{q}+1;
\end{align*}
to justify the last inequality in the chain, note that it is
 equivalent to
\[(d-e)\left((d+e-3)\sqrt{q}-\frac{d-e}{4}\right)\geq 0\]
and holds true because either $e=d$, or else $d-e>0$ and we
can write
\[(d+e-3)\sqrt{q}-\frac{d-e}{4}
\geq (d+e-3)\sqrt{2}-\frac{d-e}{4}
\geq \frac{(4\sqrt{2}-1)d+(4\sqrt{2}+1)e-12\sqrt{2}}{4}>0
\]
(using that $e\geq 1$ and $d\geq 3$ on the last step).
\end{proof}

Let $a_0=0$, $b_0=d^2/4$, $a_1=q-(d-1)(d-2)\sqrt{q}-d+1$,
$b_1=q+(d-1)(d-2)\sqrt{q}+1$.
For $2\leq k\leq d$, set
$a_k=kq-(d-1)(d-2)\sqrt{q}-d^2-d-1$ and
$b_k=kq+(d-1)(d-2)\sqrt{q}+d^2+d+1$. Finally, set
$a_\infty=b_\infty=q^2$. Define 
$I_k\colonequals [a_k,b_k]$ for 
$k\in\{0,\dots,d\}\cup\{\infty\}$. 

\begin{Lem}
Let $X\subset\A^n_{\F_q}$ be a hypersurface of degree $d$. Let $H\subset\A^n_{\F_q}$ be a plane. Then $\#(X\cap H)(\F_q)\in I_k$ for some $k\in\{0,\dots,d\}\cup\{\infty\}$. 
\end{Lem}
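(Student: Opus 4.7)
The plan is to dichotomize on whether or not $H$ is contained in $X$. If $f\in\F_q[x_1,\ldots,x_n]$ is a defining equation for $X$ and $f|_H\equiv 0$, then $H\subset X$, so $\#(X\cap H)(\F_q)=\#H(\F_q)=q^2\in I_\infty$, and we are done. Otherwise $f|_H$ is a nonzero polynomial of degree at most $d$ on the affine plane $H$, so $C\colonequals X\cap H$ is a curve of degree at most $d$ in $H\cong\A^2_{\F_q}$.

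Decompose $C=C_1\cup\cdots\cup C_s$ into $\F_q$-irreducible components of degrees $e_1,\ldots,e_s$ with $\sum e_i\leq d$, and let $k$ be the number of those components that are geometrically irreducible. Each such component has degree at least $1$, so $0\leq k\leq d$. The goal is then to show $\#C(\F_q)\in I_k$, which I would do by treating $k\geq 2$, $k=1$, and $k=0$ separately.

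For $k\geq 2$ the conclusion is immediate from Lemma~\ref{Lem:bounds_near_kq}. For $k=1$ the upper bound $\#C(\F_q)\leq b_1$ is exactly Lemma~\ref{Lem:important}; for the lower bound, I would apply the Aubry--Perret inequality \eqref{eq:Aubry_Perret} to the unique geometrically irreducible component $C_1$, of some degree $e\leq d$, and then use the monotonicity $(e-1)(e-2)\leq(d-1)(d-2)$ together with $e\leq d$ to conclude $\#C(\F_q)\geq \#C_1(\F_q)\geq a_1$. For $k=0$, every $C_i$ is $\F_q$-irreducible but not geometrically irreducible, so Lemma~2.3 of~\cite{Cafure_Matera} yields $|C_i(\F_q)|\leq e_i^2/4$, and summing together with the elementary inequality $\sum e_i^2\leq\bigl(\sum e_i\bigr)^2\leq d^2$ gives $\#C(\F_q)\leq d^2/4=b_0$.

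I do not expect any serious obstacle: the intervals $I_k$ were crafted precisely so that these three subcases match. The only delicate point is that the generic near-$kq$ estimate of Lemma~\ref{Lem:bounds_near_kq} is too coarse to serve as the upper bound $b_1$ when $k=1$, which is exactly why the sharper Lemma~\ref{Lem:important} was proved. The corresponding lower endpoint $a_1$ is tight, as witnessed by Example~\ref{Exa:lower_affine}, and the upper endpoint $b_1$ is essentially tight by Example~\ref{Exa:Hermitian}.
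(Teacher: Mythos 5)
Your proof is correct and follows essentially the same route as the paper: dichotomize on whether $H\subset X$, then split on $k$, using Lemma~\ref{Lem:bounds_near_kq} for $k\geq 2$, Lemma~\ref{Lem:important} plus Aubry--Perret (via monotonicity in the degree) for $k=1$, and the Cafure--Matera bound $|C_i(\F_q)|\leq e_i^2/4$ summed with $\sum e_i^2\leq d^2$ for $k=0$. The only trivial point you gloss over is the degenerate case where $f|_H$ is a nonzero constant so that $X\cap H=\emptyset$; there is no curve and no components, but then $k=0$ and $\#(X\cap H)(\F_q)=0\in I_0$, which the paper notes explicitly.
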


\begin{proof}
If $X\cap H=\emptyset$, then $\#(X\cap H)(\F_q)=0\in I_0$. 
If $H\subset X$, then $X\cap H=H$ and $\#(X\cap H)(\F_q)=q^2\in I_\infty$. Suppose that $X\cap H\neq\emptyset$ and $H\not\subset X$. Let $k$ be the number of geometrically irreducible $\F_q$-irreducible components of the degree $d$ plane curve $X\cap H\subset H\simeq\A^2_{\F_q}$. 
Then $0\leq k\leq d$. 
If $k=0$, the proof of Lemma 11 in \cite{Slavov_FFA} gives
$\#(X\cap H)(\F_q)\leq d^2/4$. If $k=1$, we use Lemma \ref{Lem:important} and the lower bound from \eqref{eq:Aubry_Perret} applied to a geometrically irreducible $\F_q$-irreducible component (necessarily of degree $\leq d$) of $X$. 
For $2\leq k\leq d$, use Lemma \ref{Lem:bounds_near_kq}. 
\end{proof}

Alternatively, one could take $b_d=dq$ by the Schwartz--Zippel lemma. 

When it comes to giving an upper bound for $|X(\F_q)|$, it will be more convenient to work with $J_1\colonequals I_0\cup I_1$ and $J_i\colonequals I_i$ for $i\in\{2,\dots,d\}\cup\{\infty\}$.

\section{Probability estimates}

We spell out in detail the proof of Theorem \ref{thm:pi}; the proofs of the remaining results will then require only slight modifications. The implied constant in each $O$-notation is allowed to depend only on $d$ (a priori, possibly also on $n$), but not on $q$ or $X$.

\begin{proof}[Proof of Theorem \ref{thm:pi}]
Set $N\colonequals |X(\F_q)|$.
For a plane $H\subset\A^n_{\F_q}$ chosen uniformly at random, consider 
$\#(X\cap H)(\F_q)$ as a random variable. Let $\mu$ and $\sigma^2$ denote its mean and variance. Lemma 10 in \cite{Slavov_FFA} and \eqref{eq:Lang_Weil_classical} imply
\begin{equation}
\mu=\frac{N}{q^{n-2}}\quad\text{and}\quad\sigma^2\leq \frac{N}{q^{n-2}}\leq q+O(\sqrt{q}).
\label{eq:var_bound}
\end{equation}

Write
\begin{equation}
\frac{N}{q^{n-2}}=\mu\leq
\sum_{k\in\{1,\dots,d\}\cup\{\infty\}}\Prob\Big(\#(X\cap H)(\F_q)\in J_k\Big)b_k.
\label{eq:sum_of_probabilities}
\end{equation}

For 
$k\in\{1,\dots,d\}\cup\{\infty\}$, denote
\[p_k\colonequals\Prob\Big(\#(X\cap H)(\F_q)\in J_k\Big).\]

We can assume that $q$ is large enough so that the intervals $J_1,\dots,J_d$ are pairwise disjoint. 

Let $k\in\{2,\dots,d\}$. 
If $H$ is a plane such that $\#(X\cap H)(\F_q)\in J_k\cup\dots\cup J_d$, then
\begin{equation}
|\#(X\cap H)(\F_q)-\mu|\geq a_k-\frac{N}{q^{n-2}}\geq (k-1)q-O(\sqrt{q}).
\label{eq:distance_from_mean}
\end{equation}
Define $t$ via 
$(k-1)q-O(\sqrt{q})=t\sigma$; then Chebyshev's inequality and 
the variance bound \eqref{eq:var_bound} imply
\begin{align}
p_k+\cdots+p_d =\Prob\Big(\#(X\cap H)(\F_q)\in J_k\cup\dots\cup J_d\Big) &\leq\frac{1}{t^2}\notag \\
&=\frac{\sigma^2}{((k-1)q-O(\sqrt{q}))^2}\notag \\
&\leq\frac{q+O(\sqrt{q})}{((k-1)q-O(\sqrt{q}))^2}\notag \\
&=\frac{1}{(k-1)^2 q}+O(q^{-3/2}).
\label{eq:key_prob_estimate}
\end{align}

If $H$ is a plane such that $\#(X\cap H)(\F_q)=q^2$, then
\[|\#(X\cap H)(\F_q)-\mu|= q^2-\frac{N}{q^{n-2}}\geq q^2-O(q).\]
Define $t$ via $q^2-O(q)=t\sigma$; then
\[
p_\infty\leq \frac{1}{t^2}=\frac{\sigma^2}{(q^2-O(q))^2}\leq 
\frac{q+O(\sqrt{q})}{(q^2-O(q))^2}=
q^{-3}+O(q^{-7/2}),\quad\text{and hence}\quad p_\infty b_\infty=O(q^{-1}).
\]

Note that $b_k-b_{k-1}=q+O(1)$ for $2\leq k\leq d$. 
We now go back to \eqref{eq:sum_of_probabilities} and apply the Abel summation formula:
\begin{align*}
\frac{N}{q^{n-2}}=\mu&\leq (p_1+\cdots+p_d)b_1+(p_2+\cdots+p_d)(b_2-b_1)+\cdots+p_d(b_d-b_{d-1})+p_\infty b_\infty\\
&\leq b_1+\frac{1}{1^2}+\cdots+\frac{1}{(d-1)^2}+O(q^{-1/2})\\
&\leq q+(d-1)(d-2)\sqrt{q}+1+\pi^2/6+O(q^{-1/2}).
\end{align*}
Multiply both sides by $q^{n-2}$ to arrive at
\eqref{eq:pi}.

Going through all the explicit inequalities with a 
$O$-term, one can compute explicitly a possible value of the constant
implicit in \eqref{eq:pi}. In fact, since there is a choice of $C_d$ in the Lang--Weil bound that depends only on $d$ and not on $n$, a second look at all the inequalities written down in the proof above reveals that the implied constant in \eqref{eq:pi} can likewise be chosen to not depend on $n$. 
\end{proof}

For the rest of the paper, we follow the notation and proof of Theorem \ref{thm:pi}.

\begin{proof}[Proof of Theorem \ref{thm:lower_bound}]
Say that a plane $H$ is ``bad'' if $\#(X\cap H)(\F_q)\in I_0$ and ``good'' otherwise. If $H\subset\A^2_{\F_q}$ is a bad plane, then
\[|\#(X\cap H)(\F_q)-\mu|\geq \frac{N}{q^{n-2}}-\frac{d^2}{4}\geq q-O(\sqrt{q}).\]
By computations similar to the ones in the proof of Theorem \ref{thm:pi}, the probability that a plane is bad is at most 
$q^{-1}+O(q^{-3/2})$. Every good plane contributes at least $a_1$ to the mean. Therefore
\[
\frac{N}{q^{n-2}}=\mu 
\geq (1-q^{-1}-O(q^{-3/2}))(q-(d-1)(d-2)\sqrt{q}-d+1),
\]
\end{proof}
giving \eqref{eq:lower_bound}.

\begin{proof}[Proof of Corollary \ref{Cor:long}]
Modify the proof of Theorem \ref{thm:lower_bound}, but use
the upper bound for $N$ from \eqref{eq:pi} and the lower bound for $N$ from \eqref{eq:lower_bound} respectively for the upper bound on $\sigma^2$ and the lower bound on 
$N/q^{n-2}-d^2/4$. 
\end{proof}

\begin{proof}[Proof of Theorem \ref{thm:projective}]
We now slice with a random plane $H\subset \P^n_{\F_q}$. The mean $\mu$ of $\#(X\cap H)(\F_q)$
is $N\rho_1$, where $N=|X(\F_q)|$ and 
$\rho_1=(q^3-1)/(q^{n+1}-1)$ is the probability that a plane passes through a given point. Let $\rho_2$ be the probability that a plane passes through two distinct given points. Explicitly (in terms of $q$-binomial coefficients), 
$\rho_2=\binom{n-1}{1}_q/\binom{n+1}{3}_q$. One verifies directly that $\rho_2\leq \rho_1^2$ and expresses $\sigma^2$ as in \cite{Tao_blog}:
\[N^2\rho_1^2+\sigma^2=\mu^2+\sigma^2=\mu+N(N-1)\rho_2\leq\mu+N^2\rho_2\]
to deduce $\sigma^2\leq\mu$. 

We can still take $I_0=[0,d^2/4]$. Use the projective version of \eqref{eq:Aubry_Perret} (Corolalry 2.5 in \cite{Aubry_Perret}). Adapt $I_1$ with 
$a_1=q-(d-1)(d-2)\sqrt{q}+1$. Use $I_\infty=\{q^2+q+1\}$. Up to a summand $d$ to account for points at infinity, the remaining $a_k$ and $b_k$ are unchanged. 
 
Proceed as in the proof of Theorems \ref{thm:pi} and \ref{thm:lower_bound}. On the very last step in proving either bound, mutiply by $1/\rho_1$ rather than by $q^{n-2}$ 
and use that $1/\rho_1=q^{n-2}+O(q^{n-5})$. 
\end{proof}

\section{Explicit versions}

\begin{proof}[Proof of Theorem \ref{thm:explicit}]
The statement clearly holds for $d=1$, so assume that 
$d\geq 2$. We will use the explicit Cafure--Matera bound for $N$. Replace the variance bound \eqref{eq:var_bound} by
\[\sigma^2\leq\frac{N}{q^{n-2}}\leq q+(d-1)(d-2)\sqrt{q}+5d^2+d+1\leq (8.44/7.44)q;\]
to verify the last inequality above, we argue as follows. 
For any $c_1>0$ and $c_2>0$, the function 
$q\mapsto q/(c_1\sqrt{q}+c_2)$ is increasing. Therefore
\[\frac{q}{(d-1)(d-2)\sqrt{q}+5d^2+d+1}>\frac{15d^{13/3}}{(d-1)(d-2)\sqrt{15}d^{13/6}+5d^2+d+1}.\]
It remains to check that the function $g(d)$ on the right-hand side above satisfies $g(d)>7.44$ for any integer $d\geq 2$. On the one hand, $g$ grows like $d^{1/6}$ so one easily exhibits a $d_0$ such that $g(d)>7.44$ for $d>d_0$. Then a simple computer calculation checks that $g(d)>7.44$ for 
integers $d\in\{2,\dots,d_0\}$ as well.

In the same way, one readily checks that the intervals $J_1,\dots,J_d$ are pairwise disjoint. 

For $k\in\{2,\dots,d\}$, replace \eqref{eq:distance_from_mean} by
\[a_k-\frac{N}{q^{n-2}}\geq (k-1)q-2(d-1)(d-2)\sqrt{q}-2(3d^2+d+1)\geq (5.45/7.45)(k-1)q;\]
to check the last inequality, one has to consider only $k=2$ and to argue as above. 

For $k\in\{2,\dots,d\}$, \eqref{eq:key_prob_estimate} is now replaced by
\[p_k+\cdots+p_d\leq \frac{(8.44/7.44)q}{\left((5.45/7.45)(k-1)q\right)^2}<\frac{2.12}{(k-1)^2q}.\]

To bound $p_\infty b_\infty$, note that $q>15d^{13/3}>15\times 2^{13/3}>302$, so
\[p_\infty b_\infty\leq\frac{(8.44/7.44)q}{(q^2-(8.44/7.44)q)^2}q^2
=\frac{8.44\times 7.44q}{(7.44q-8.44)^2}<0.01.\]

Since $b_k-b_{k-1}=q$ for $3\leq k\leq d$, but $b_2-b_1=q+d^2+d$, we have to estimate
$(d^2+d)/q<(d^2+d)/15d^{13/3}<0.02$. The Abel summation argument now gives
\[\frac{N}{q^{n-2}}\leq q+(d-1)(d-2)\sqrt{q}+1+2.12(\pi^2/6+0.02)+0.01<q+(d-1)(d-2)\sqrt{q}+5.\qedhere\]
\end{proof}

\begin{proof}[Proof of Theorem \ref{thm:zone}]
Again, assume $d\geq 2$. We can assume that the right-hand side of \eqref{eq:zone_11} is less than the right-hand side of \eqref{eq:N_with_three_halves}; i.e.,
\[
4(d-1)(d-2)\sqrt{q}+2(d^2+d+13)<q.
\]
This inequality implies in particular that the intervals $J_1,\dots,J_d$ are pairwise disjoint. Note that it is equivalent to $q>r(d)^2$, where $r(d)$ is the positive root of the quadratic equation
$x^2-4(d-1)(d-2)x-2(d^2+d+13)=0$. 

Due to \eqref{eq:N_with_three_halves}, now we can use the variance bound $\sigma^2\leq N/q^{n-2}\leq (3/2)q$. Also, \eqref{eq:N_with_three_halves} gives
\[a_k-\frac{N}{q^{n-2}}= kq-(d-1)(d-2)\sqrt{q}-(d^2+d+1)-\frac{N}{q^{n-2}}\geq \frac{k-1}{2}q\]
for $2\leq k\leq d$. Therefore $p_k+\cdots+p_d$ is now bounded by $6/((k-1)^2q)$.

We bound $(d^2+d)/q$ by $(d^2+d)/(r(d))^2<0.16$ for $d\geq 2$. Finally, note that $q>r(2)^2=38$, so $q\geq 41$, and
we can bound $p_\infty b_\infty$ by $6q/(2q-3)^2<0.04$. Therefore
\[\frac{N}{q^{n-2}}\leq q+(d-1)(d-2)\sqrt{q}+1+6(\pi^2/6+0.16)+0.04<q+(d-1)(d-2)\sqrt{q}+12.\qedhere\]
\end{proof}

\begin{proof}[Proof of Theorem \ref{thm:explicit_lower_bound}]
As above, assume that $d\geq 2$. We bound the variance as
\[\sigma^2\leq\frac{N}{q^{n-2}}\leq q+(d-1)(d-2)\sqrt{q}+5d^2+d+1\leq (8.44/7.44)q.\]
Also,
\[\frac{N}{q^{n-2}}-\frac{d^2}{4}\geq q-(d-1)(d-2)\sqrt{q}-21d^2/4-d-1\geq (6.44/7.44)q.\]
From here, we bound the probability that a plane is bad by $1.6/q$. Thus
\[\frac{N}{q^{n-2}}\geq \left(1-\frac{1.6}{q}\right)(q-(d-1)(d-2)\sqrt{q}-d+1)\geq q-(d-1)(d-2)\sqrt{q}-(d+0.6).\qedhere\]
\end{proof}

%\subsection*{Acknowledgments} 

\end{document}